\newtheorem{theorem}{Theorem}
\newtheorem{proposition}{Proposition}
\newtheorem{lemma}{Lemma}
\newtheorem{claim}{Claim}
\newtheorem{corollary}{Corollary}
\newtheorem{remark}{Remark}
\newtheorem{conjecture}{Conjecture}
\begin{document}
\title{\Large\bf Rainbow connection of graphs with diameter 2\footnote{Supported by NSFC.}}
\author{\small Hengzhe Li, Xueliang Li, Sujuan Liu\\
\small Center for Combinatorics and LPMC-TJKLC\\
\small Nankai University, Tianjin 300071, China\\
\small lhz2010@mail.nankai.edu.cn; lxl@nankai.edu.cn;
sjliu0529@126.com}
\date{}
\maketitle
\begin{abstract}
A path in an edge-colored graph $G$, where adjacent edges may have
the same color, is called a rainbow path if no two edges of the path
are colored the same. The rainbow connection number $rc(G)$ of $G$
is the minimum integer $i$ for which there exists an
$i$-edge-coloring of $G$ such that every two distinct vertices of
$G$ are connected by a rainbow path. It is known that for a graph
$G$ with diameter 2, to determine $rc(G)$ is NP-hard. So, it is
interesting to know the best upper bound of $rc(G)$ for such a graph
$G$. In this paper, we show that $rc(G)\leq 5$ if $G$ is a
bridgeless graph with diameter $2$, and that $rc(G)\leq k+2$ if $G$
is a connected graph of diameter $2$ with $k$ bridges, where $k\geq
1$.

{\flushleft\bf Keywords}: Edge-coloring, Rainbow path, Rainbow
connection number, Diameter\\[2mm]
{\bf AMS subject classification 2010:} 05C15, 05C40
\end{abstract}

\section{Introduction}

All graphs in this paper are undirected, finite and simple. We refer
to book \cite{bondy} for graph theoretical notation and terminology
not described here. A path in an edge-colored graph $G$, where
adjacent edges may have the same color, is called a $rainbow\ path$
if no two edges of the path are colored the same. An edge-coloring
of graph $G$ is a $rainbow\ edge$-$coloring$ if every two distinct
vertices of graph $G$ are connected by a rainbow path. The $rainbow\
connection\ number\ rc(G)$ of $G$ is the minimum integer $i$ for
which there exists an $i$-edge-coloring of $G$ such that every two
distinct vertices of $G$ are connected by a rainbow path. It is easy
to see that $diam(G)\leq rc(G)$ for any connected graph $G$, where
$diam(G)$ is the diameter of $G$.

The rainbow connection number was introduced by Chartrand et al. in
\cite{char}. It is of great use in transferring information of high
security in multicomputer networks. We refer the readers to
\cite{chak,char2} for details.

Chartrand et al. \cite{char} considered the rainbow connection
number of several graph classes and showed the following proposition
and theorem.

\begin{proposition}\cite{char}  Let $G$ be a nontrivial connected graph of size $m$.
Then

(i) $src(G)=1$ if and only if $G$ is a complete graph;

(ii) $rc(G)=m$ if and only if $G$ is a tree;

(iii) $rc(C_n)=\lceil n/2\rceil$ for each integer $n\geq 4$, where
$C_n$ is a cycle with size $n$.
\end{proposition}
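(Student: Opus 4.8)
The plan is to dispatch parts (i) and (ii) quickly and to concentrate the effort on part (iii), whose lower bound for odd cycles is the only genuinely delicate point.

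For (i), the key observation is that under a single-colour edge-colouring a rainbow path can use at most one edge, so two vertices are rainbow connected exactly when they are adjacent; hence one colour suffices for all pairs if and only if every pair of vertices is adjacent, i.e. if and only if $G$ is complete. For (ii), one direction is that colouring the $m$ edges of a tree with $m$ distinct colours makes every path rainbow, so $rc(T)\le m$; conversely I would show that any two distinct edges of a tree lie on a common path — the unique path joining a suitable endpoint of one edge to a suitable endpoint of the other passes through both — so the two edges are forced to receive different colours, giving $rc(T)\ge m$ and hence $rc(T)=m$. To finish (ii) it remains to prove the contrapositive of the ``only if'': if $G$ is not a tree it contains a cycle, and deleting one cycle-edge $e$ leaves a connected spanning subgraph $G-e$; colouring its $m-1$ edges with distinct colours already rainbow-connects every pair through $G-e$, and $e$ may reuse any colour, so $rc(G)\le m-1<m$.

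For the upper bound in (iii) I would label the edges $e_0,\dots,e_{n-1}$ cyclically and exhibit a colouring in which every arc of length at most $\lfloor n/2\rfloor$ is rainbow; since any two vertices are joined by an arc of length at most $\lfloor n/2\rfloor$, this yields rainbow connection. For even $n=2k$ I colour antipodal edges $e_i,e_{i+k}$ alike, using $k$ colours, so that two equally coloured edges always lie $k$ apart and no arc of length $\le k$ repeats a colour. For odd $n=2k+1$ I use the pattern $1,2,\dots,k,\,k+1,\,1,2,\dots,k$ with $k+1$ colours, where any two equally coloured edges are separated so that the shortest arc containing both has $k+1$ edges; again every arc of length $\le k$ is rainbow. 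This gives $rc(C_n)\le\lceil n/2\rceil$ in both parities.

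The \emph{main obstacle} is the lower bound of (iii) for odd $n=2k+1$, where $diam(C_n)=k$ is one less than the claimed value, so the trivial bound $rc\ge diam$ (which settles the even case) is not enough. Here I would suppose a rainbow colouring with only $k$ colours and consider a pair of vertices at distance exactly $k$: its complementary arc has $k+1>k$ edges and so cannot be rainbow, forcing the length-$k$ arc to be rainbow. Thus every arc $A_i=(e_i,\dots,e_{i+k-1})$ uses all $k$ colours; comparing $A_i$ with $A_{i+1}$, the only edges not shared are $e_i$ and $e_{i+k}$, so $c(e_i)=c(e_{i+k})$ for every $i$. The colouring is therefore invariant under a shift by $k$, and since $\gcd(k,2k+1)=1$ this collapses to a constant colouring, contradicting that $A_0$ (with $k\ge2$ edges) is rainbow. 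Hence $k$ colours are impossible and $rc(C_n)\ge k+1=\lceil n/2\rceil$, completing the proposition.
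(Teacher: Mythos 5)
This proposition is quoted in the paper from Chartrand et al.~\cite{char} as background; the paper itself gives no proof of it, so there is no internal argument to compare yours against, and your proposal must be judged on its own. On its own terms it is correct and complete. Two remarks. First, part (i) as stated concerns $src(G)$, the \emph{strong} rainbow connection number (rainbow geodesics), which your write-up does not name explicitly; your one-edge observation nevertheless settles it, since under a single colour any rainbow path is a single edge and a single edge is automatically a geodesic, so the equivalence with completeness holds for $src$ exactly as for $rc$. Second, the only delicate point, the lower bound $rc(C_{2k+1})\ge k+1$, is handled soundly: with only $k$ colours the $(k+1)$-edge arc joining two vertices at distance $k$ cannot be rainbow, so every $k$-edge arc must be rainbow and hence uses each colour exactly once; comparing the arcs $A_i$ and $A_{i+1}$ gives $c(e_i)=c(e_{i+k})$ for all $i$, and since $\gcd(k,2k+1)=1$ this shift-invariance forces a constant colouring, contradicting $k\ge 2$ (which holds because an odd $n\ge 4$ is at least $5$). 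Your upper-bound colourings for both parities (antipodal edges alike for even $n$; the pattern $1,\dots,k,k+1,1,\dots,k$ for odd $n$) verify correctly, and parts (i) and (ii) --- uniqueness of tree paths for the lower bound, deletion of a cycle edge for the converse --- are complete. The shift-invariance collapse is a clean, slightly slicker packaging of the counting that such a lower bound requires; a reader wanting the original treatment can consult \cite{char}, but nothing in your argument depends on it.
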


\begin{theorem}\cite{char}
For integers $s$ and $t$ with $2\leq s\leq t,$
$$rc(K_{s,t})=\min\{\lceil\sqrt[s]{t}\rceil, 4\},$$
where $K_{s,t}$ is the complete bipartite graph with bipartition $X$
and $Y$, such that $|X|=s$ and $|Y|=t$.
\end{theorem}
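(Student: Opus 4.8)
The plan is to prove the two inequalities $rc(K_{s,t})\ge \min\{\lceil\sqrt[s]{t}\rceil,4\}$ and $rc(K_{s,t})\le \min\{\lceil\sqrt[s]{t}\rceil,4\}$ separately. Throughout write $X=\{x_1,\dots,x_s\}$ and $Y$ for the two parts, set $k=\lceil\sqrt[s]{t}\rceil$ (note $k\ge 2$ because $t\ge s\ge 2$), and encode any edge-coloring by associating to each $y\in Y$ the vector $\vec c(y)=(c(x_1y),\dots,c(x_sy))$ of colors on its $s$ incident edges. Since $\mathrm{diam}(K_{s,t})=2$, the only pairs needing attention are two vertices in the same part, and the whole argument rests on one elementary observation: a rainbow path meets each color at most once, so a rainbow path in a $p$-coloring has length at most $p$; moreover, as $K_{s,t}$ is bipartite, any path joining two vertices of the \emph{same} part has even length.

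For the lower bound, suppose a rainbow coloring uses $p\le 3$ colors. If two vertices $y,y'\in Y$ have $\vec c(y)=\vec c(y')$, then no path $y\,x_i\,y'$ is rainbow, so they would need a rainbow path of even length $\ge 4$, hence at least $4$ distinct colors, contradicting $p\le 3$. Therefore all $t$ vectors $\vec c(y)$ are distinct, which forces $p^{s}\ge t$, i.e.\ $p\ge k$. Consequently every rainbow coloring either uses $\ge 4$ colors or uses $p\le 3$ colors with $p\ge k$; in both cases the number of colors is at least $\min\{k,4\}$, giving $rc(K_{s,t})\ge\min\{k,4\}$.

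For the upper bound I exhibit two colorings and take the better one. First, $rc\le k$: assign the $t$ vertices of $Y$ pairwise distinct vectors from $\{1,\dots,k\}^{s}$ (possible since $k^{s}\ge t$), chosen so that for every pair of coordinates $i\ne j$ some assigned vector has distinct $i$-th and $j$-th entries (routine to arrange as $k\ge 2$), and color $x_iy$ by the $i$-th entry of $\vec c(y)$. Distinct vectors give every $Y$-pair a rainbow path $y\,x_i\,y'$, and the coordinate condition gives every $X$-pair a rainbow path $x_i\,y\,x_j$. Second, $rc\le 4$: color with $\{1,2,3,4\}$, using each of the four constant vectors for at most one $y$ and giving all remaining vertices non-constant vectors, arranged so that for every pair of coordinates and every pair of colors there is a \emph{witness} vertex realizing those colors on those coordinates; a colliding pair $y,y'$ (equal non-constant vectors) is then joined by a length-$4$ rainbow detour $y\,x_i\,y''\,x_j\,y'$ through a suitable witness $y''$, where $i,j$ are chosen with $\vec c(y)_i\ne\vec c(y)_j$. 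Combining, $rc\le\min\{k,4\}$, which matches the lower bound.

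The main obstacle is the second construction, the bound $rc\le 4$ in the regime $t>4^{s}$, where vectors must repeat heavily. The delicate points are that a constant (monochromatic) vector must never be duplicated—two vertices all of whose $s$ edges carry a single common color admit no rainbow detour whatsoever, since both end-edges of any length-$4$ path would repeat that color—and that the witness vertices needed for the length-$4$ detours can always be installed simultaneously. I expect the verification that these requirements are mutually compatible for all admissible $t$ to be the technical heart of the proof, whereas the lower bound and the $rc\le k$ construction are comparatively direct consequences of the length/parity observation above.
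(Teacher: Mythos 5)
First, a structural point: the paper you were given contains no proof of this statement at all --- it is quoted (as Theorem 2) from the cited paper of Chartrand, Johns, McKeon and Zhang --- so your proposal can only be compared with that original argument, not with anything in this paper. On its own merits, your lower bound is complete and correct: with $p\le 3$ colors, parity plus the ``rainbow path has length at most $p$'' observation forces any rainbow path between two vertices of $Y$ to have length $2$, hence all vectors $\vec c(y)$ are distinct and $p^s\ge t$. Your first upper-bound coloring ($rc\le k=\lceil\sqrt[s]{t}\rceil$ when $t\le k^s$) is also correct, and the step you wave off as routine really is: since $t\ge s$, simply include among the chosen vectors the $s$ vectors having entry $2$ in one coordinate and $1$ elsewhere; these cover every coordinate pair of $X$.

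The one genuine incompleteness is exactly the place you flag: for $t>4^s$ you state the witness requirements for the $4$-coloring but never verify that they can all be met simultaneously, so as written the upper bound $rc\le 4$ is a plan, not a proof. The gap is real but fillable along precisely your lines. Forbid constant vectors altogether; for each of the six unordered color pairs $\{a,b\}$ and each index $i\in\{1,\dots,s\}$, designate one vertex of $Y$ whose vector equals $b$ in every coordinate except $a$ in coordinate $i$. These $6s$ vectors are pairwise distinct and non-constant, and for every coordinate pair $\{i,j\}$ and every color pair $\{a,b\}$ the vertex designated for $(\{a,b\},i)$ realizes $\{a,b\}$ on $\{i,j\}$; since $t>4^s\ge 6s$ for $s\ge 2$, there is room, and every remaining vertex may receive an arbitrary non-constant vector. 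Every collision is then non-constant and witnessed, and your length-$4$ detour closes the argument. You should know, however, that the original proof sidesteps all of this bookkeeping with a simpler coloring: fix $x_1,x_2\in X$, split $Y$ into nonempty parts $Y_1,Y_2$, color the edges from $x_1$ to $Y_1,Y_2$ with $1,2$ and from $x_2$ to $Y_1,Y_2$ with $3,4$, and color the edges at each remaining $x_i$ so that the functions $y\mapsto c(x_iy)$ are pairwise distinct. Two vertices in different parts of $Y$ meet a rainbow path through $x_1$; two vertices in the same part use the detour $y-x_1-y''-x_2-y'$ with $y''$ in the opposite part, which is automatically rainbow with colors $1,2,4,3$ (or $2,1,3,4$); pairs in $X$ are separated by the distinctness of the color functions. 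Compared with that, your collision-and-witness scheme proves nothing more and costs more case analysis, so if you finish your write-up, the designated-witness filling above is the shortest way to do it.
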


Krivelevich and Yuster \cite{kri} investigated the relation between
the rainbow connection number and the minimum degree of a graph, and
showed the following theorem.

\begin{theorem}\cite{kri}
A connected graph $G$ with $n$ vertices and minimum degree $\delta$
has $rc(G)<\frac{20n}{\delta}.$
\end{theorem}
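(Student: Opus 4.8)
The plan is to bound $rc(G)$ by the size of a small connected ``skeleton'' that dominates $G$ from nearby, and then to force the skeleton to be small using the minimum-degree hypothesis. The underlying principle is this: if $F$ is a connected subgraph of $G$ such that every vertex of $G$ lies within distance $2$ of $V(F)$ (a connected two-step dominating set), then one can rainbow-colour $G$ by colouring a spanning tree of $F$ with pairwise distinct colours and adding further colours to route each remaining vertex into $F$. Since the factor $1/\delta$ in the target can only enter through the degree hypothesis, the whole problem reduces to building such an $F$ with $|V(F)|=O(n/\delta)$ and then controlling the number of extra colours.

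First I would choose $S=\{u_1,\dots,u_k\}$, a maximal set of vertices that are pairwise at distance at least $3$ in $G$. Because any two of them are at distance $\ge 3$, their closed neighbourhoods $N[u_1],\dots,N[u_k]$ are pairwise disjoint, and each has at least $\delta+1$ vertices; hence $k(\delta+1)\le n$, that is $k\le n/(\delta+1)$. This disjoint-neighbourhood count is precisely the step that avoids the logarithmic overhead a naive greedy dominating set would incur. Maximality of $S$ forces every vertex of $G$ to lie within distance $2$ of $S$, so $S$ two-step dominates. Using that $G$ is connected, I would then link the $u_i$ into a single connected subgraph $F$: since every vertex is within distance $2$ of $S$, two clusters meeting across any edge lie within distance $5$ of each other, so the auxiliary graph joining clusters at distance $\le 5$ is connected, and a spanning tree of it is realised by $k-1$ short paths. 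This keeps $|V(F)|=O(n/\delta)$ while $F$ still two-step dominates.

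Finally I would colour the edges of a spanning tree of $F$ with pairwise distinct colours, which already rainbow-connects all of $V(F)$, and then colour the at most two ``steps'' by which an outside vertex reaches $F$ from a further palette, so that for any $s,t$ the ascent of $s$ into $F$, the rainbow tree-path across $F$, and the descent out to $t$ concatenate into a single rainbow path. The hard part is exactly this step-colouring: the edges used while ascending from $s$ and those used while descending to $t$ must receive no common colour, so a single small palette shared by both ends does not suffice, and one is forced to attach several private colours to each skeleton vertex, the delicate case being two vertices that enter $F$ at the same place, whose two length-$\le 2$ approaches must themselves avoid repeats. Since the number of such private colours is bounded while the number of skeleton vertices is $O(n/\delta)$, their product is again $O(n/\delta)$; tracking the constants through the bound $k\le n/(\delta+1)$, the length-$5$ connectors, and the step-palette is what yields $rc(G)<20n/\delta$.
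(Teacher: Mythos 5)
First, a point of context: the paper you were given does not prove this statement at all --- it is the Krivelevich--Yuster bound, quoted as background directly from \cite{kri} --- so there is no in-paper proof to compare against, and your sketch has to be judged on its own merits. Its first two steps are sound and are indeed the standard opening moves: a maximal set $S$ of vertices at pairwise distance at least $3$ has pairwise disjoint closed neighbourhoods, each of size at least $\delta+1$, so $|S|\leq n/(\delta+1)$; maximality makes $S$ two-step dominating; and your auxiliary-graph argument (join two vertices of $S$ when they are within distance $5$; any edge of $G$ between two cluster regions witnesses such a pair) correctly produces a connected two-step dominating subgraph $F$ with $|V(F)|\leq 5|S|-4=O(n/\delta)$.

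The genuine gap is in the final colouring step, and it is not a removable technicality: the scheme you propose --- a rainbow spanning tree of $F$ plus a \emph{bounded} number of private colours attached to each skeleton vertex --- provably cannot work. Take a vertex $x$, pairwise nonadjacent vertices $u_1,\dots,u_m$ all adjacent to $x$, and for each $i$ a private clique $C_i$ of order $\delta$ completely joined to $u_i$. This graph is connected with minimum degree $\delta$ and $n=1+m(\delta+1)$. The set $\{x\}$ is a legitimate output of your first step (it is maximal, since every vertex lies within distance $2$ of $x$), so $F$ can be the single vertex $x$ with an empty spanning tree. But $x$ is a cut vertex and the unique $u_i$--$u_j$ path is $(u_i,x,u_j)$, so \emph{any} rainbow colouring must give the edges $xu_1,\dots,xu_m$ pairwise distinct colours: one skeleton vertex alone forces $m=(n-1)/(\delta+1)=\Theta(n/\delta)$ colours. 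Hence no constant per-vertex palette, however cleverly assigned, can succeed, and the ``delicate case'' you flag (several vertices entering $F$ at the same place) is precisely where the proposal breaks rather than a detail to be tracked. The theorem itself is safe here, since $m<20n/\delta$, but the accounting ``$O(1)$ private colours $\times$ $O(n/\delta)$ skeleton vertices'' is the wrong shape: the number of colours needed at a single skeleton vertex can itself be $\Theta(n/\delta)$. A correct completion needs a mechanism whose cost is bounded \emph{globally} --- for instance, absorbing such bottleneck vertices into the skeleton, or charging every forced new colour to a set of $\Omega(\delta)$ vertices (the private cliques above) so that the total stays $O(n/\delta)$ --- and supplying that mechanism is the real content of the proof in \cite{kri}, which your sketch does not yet contain.
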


In fact, Krivelevich and Yuster \cite{kri} made the following
conjecture.

\begin{conjecture}\cite{kri}
If $G$ is a connected graph with $n$ vertices and $\delta(G) \geq
3$, then $rc(G) < \frac{3n}{4}.$
\end{conjecture}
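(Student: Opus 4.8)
Since this is a conjecture of Krivelevich and Yuster, the aim of the sketch is to lay out the most promising line of attack and to locate the genuine difficulty rather than to present a routine verification. A first useful observation is that the bound $rc(G)<20n/\delta$ already recorded above settles the conjecture for all $\delta\ge 27$, since there $20n/\delta<3n/4$; thus the statement is really about the bounded range $3\le\delta\le 26$, and one expects the tight instance to be $\delta=3$. The plan is therefore to route rainbow paths along a small connected ``backbone'' and to control its size when $\delta$ is small.

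Concretely, I would use a connected dominating set $D$ of $G$, i.e. a set with $G[D]$ connected and every vertex outside $D$ adjacent to $D$; write $\gamma_c(G)=|D|$ for a minimum one. The first step is a reduction $rc(G)\le\gamma_c(G)+c$ for an absolute constant $c$. Fix a spanning tree $T$ of $G[D]$ and colour its $\gamma_c(G)-1$ edges with pairwise distinct colours, so that any two vertices of $D$ are rainbow-connected through $T$. Assign to each $x\notin D$ a port $f(x)\in D$ with $xf(x)\in E(G)$, and connect $u,v\notin D$ along the walk $u\,f(u)\,P\,f(v)\,v$, where $P$ is the tree path between the ports. The only obstruction is a colour clash among the two port-edges and $P$; the key lemma is that the port-edges can be coloured from a bounded palette disjoint from the tree colours so that a rainbow route always exists. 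This is a local colouring problem on the attachment structure between $V\setminus D$ and $D$, and it is exactly where the additive constant $c$ is spent.

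The harder step is the extremal bound $\gamma_c(G)<\tfrac{3n}{4}-O(1)$ for $\delta(G)\ge 3$. For moderately large $\delta$ the first-moment bound $\gamma(G)\le n\frac{1+\ln(\delta+1)}{\delta+1}$ on the ordinary domination number, combined with $\gamma_c\le 3\gamma-2$, already lands well below $3n/4$; so, as anticipated, the whole difficulty concentrates on the smallest values of $\delta$, and especially on $\delta=3$, where such averaging is far too lossy. Here I would build $D$ constructively: grow a connected set, at each stage absorbing a vertex that newly dominates as many vertices as possible, and run a charging argument that assigns the still-undominated vertices to backbone vertices using the degree-$3$ structure, aiming to show that each backbone vertex accounts for enough dominated vertices to force $|D|\le\tfrac34 n$ up to lower order. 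Making this charging survive the worst local configurations---heavily overlapping neighbourhoods of newly absorbed vertices, short cycles, and so on---is the main obstacle, and is presumably the reason the $\delta=3$ case resists soft methods.

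Finally I would confirm that the constant $\tfrac34$ cannot be improved by exhibiting graphs with $\delta=3$ whose rainbow connection number is asymptotically $\tfrac34 n$; this both certifies that the backbone approach has the right order of magnitude and explains the precise shape of the conjectured bound.
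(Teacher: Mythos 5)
This statement is a conjecture quoted from \cite{kri}; the paper does not prove it directly, but records (Theorem 4, due to Schiermeyer \cite{sch}) that $rc(G)\leq \frac{3n-1}{4}$ whenever $\delta(G)\geq 3$, which settles the conjecture since $\frac{3n-1}{4}<\frac{3n}{4}$. Your proposal is therefore competing against an actual theorem, and as written it is a research plan with a genuine gap, not a proof. The sound parts: the reduction to small $\delta$ via $rc(G)<20n/\delta$ is correct (it disposes of $\delta\geq 27$), and the backbone reduction you want, $rc(G)\leq \gamma_c(G)+c$, does not need to be reinvented --- it is exactly Theorem 5 of this paper (Chandran et al.): since $\delta(G)\geq 3$, $G$ has no pendant vertices, every connected dominating set $D$ is a connected two-way dominating set, and $rc(G)\leq rc(G[D])+3\leq (|D|-1)+3=\gamma_c(G)+2$; your worry about colour clashes between port-edges and tree colours is precisely what that theorem resolves.

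The gap is your ``harder step'': the extremal bound $\gamma_c(G)\leq \frac{3n}{4}-O(1)$ for $\delta(G)\geq 3$ is left as a sketch of a greedy-plus-charging argument that you yourself identify as the main obstacle, and no amount of the first-moment domination bound helps, since $3n\frac{1+\ln(\delta+1)}{\delta+1}<\frac{3n}{4}$ only once $\delta\geq 14$ or so --- the entire range $3\leq\delta\leq 13$, not just $\delta=3$, falls to the unproven charging step. What you are trying to prove there is in fact a known and genuinely hard theorem: since $\gamma_c(G)=n-\ell(G)$, where $\ell(G)$ is the maximum number of leaves of a spanning tree, your claim is equivalent to the Kleitman--West/Griggs--Wu theorem that every connected graph with $\delta\geq 3$ has a spanning tree with at least $n/4+2$ leaves. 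Citing that theorem would close your argument (and is essentially the route taken in \cite{chan}), but one further point needs care: $\gamma_c(G)+2\leq (\frac{3n}{4}-2)+2=\frac{3n}{4}$ yields only the non-strict inequality $rc(G)\leq\frac{3n}{4}$, so to get the strict bound of the conjecture you must either sharpen the additive constant or handle the boundary case separately --- Schiermeyer's proof avoids this by establishing $\frac{3n-1}{4}$ outright.
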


Schiermeyer showed that the above conjecture is true by the
following theorem, and that the following bound is almost best
possible since there exist $3$-regular connected graphs with
$rc(G)=\frac{3n-10}{4}.$

\begin{theorem}\cite{sch}
If $G$ is a connected graph with $n$ vertices and $\delta(G)\geq 3$
then, $$rc(G)\leq \frac{3n-1}{4}.$$
\end{theorem}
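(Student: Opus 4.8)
The plan is to reduce the problem to coloring a carefully chosen spanning subgraph. The first observation I would use is monotonicity: if $H$ is a connected spanning subgraph of $G$, then any rainbow edge-coloring of $H$ extends to one of $G$ by assigning the remaining edges arbitrary already-used colors, since every rainbow path of $H$ survives in $G$; hence $rc(G)\le rc(H)$. So it suffices to exhibit a connected spanning subgraph $H$ with $rc(H)\le\frac{3n-1}{4}$. The natural candidate for $H$ is a spanning tree, but a plain tree on $n$ vertices forces $n-1$ colors by Proposition 1(ii), far too many; the gain must come from reusing colors, and the minimum-degree-$3$ hypothesis is what will make reuse possible.

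Next I would fix a spanning tree $T$ of $G$ chosen to have as many leaves as possible. Writing $\ell$ for the number of leaves, the internal vertices of $T$ induce a subtree (the \emph{skeleton}) on $n-\ell$ vertices with $n-\ell-1$ edges, and there are exactly $\ell$ pendant edges joining leaves to the skeleton. The key quantitative input is the classical fact that a connected graph with minimum degree at least $3$ admits a spanning tree with at least $\tfrac{n}{4}+2$ leaves; I would either invoke this directly or reprove the weaker bound I actually need. With $\ell\ge \tfrac{n}{4}+2$, the skeleton has at most $\tfrac{3n}{4}-3$ edges.

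The coloring I propose is to give the $n-\ell-1$ skeleton edges all-distinct colors, so that any two internal vertices are automatically joined by a rainbow path along the skeleton. It then remains to color the $\ell$ pendant edges while reusing colors, and here the degree hypothesis enters: every leaf $u$ of $T$ has degree at least $3$ in $G$, hence at least two neighbors besides its parent in $T$. These extra edges give alternative routes from $u$ into the skeleton, so I can assign each pendant edge a color that does not conflict along the relevant skeleton path, spending only a bounded number of additional colors rather than one per leaf. Tallying the $n-\ell-1\le \tfrac{3n}{4}-3$ skeleton colors together with the few colors needed to organize the pendant edges is designed to land at the claimed $\tfrac{3n-1}{4}$.

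The hard part will be the pendant-edge stage: I must verify that after reusing colors \emph{every} pair of vertices --- not just internal ones, but leaf--internal and leaf--leaf pairs --- is still joined by a rainbow path, while simultaneously keeping the total count below $\tfrac{3n-1}{4}$. For a leaf--leaf pair $u,v$ the candidate path runs from $u$ up to the skeleton, across it, and down to $v$, and the reused colors on the two pendant edges must avoid the distinct skeleton colors in between; when a direct route fails I expect to reroute through one of the guaranteed extra neighbors of $u$ or $v$. Controlling these conflicts globally, partitioning the leaves into groups that can safely share colors, and squeezing the additive constant down to match $\tfrac{3n-1}{4}$ --- the examples with $rc(G)=\tfrac{3n-10}{4}$ show the constant is essentially forced --- is where the real work of the argument lies.
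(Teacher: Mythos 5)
First, a point of comparison: the paper you were given does not prove this statement at all --- it is Schiermeyer's theorem, reproduced with a citation as background for the authors' own work on diameter-$2$ graphs. So there is no proof in the paper to measure your attempt against; the only question is whether your proposal stands on its own. It does not, because the step you yourself flag as ``where the real work of the argument lies'' is precisely the content of the theorem, and you leave it unexecuted. Your preparatory moves are sound: monotonicity ($rc(G)\le rc(H)$ for a connected spanning subgraph $H$), the Kleitman--West fact that $\delta(G)\ge 3$ guarantees a spanning tree with $\ell\ge \frac{n}{4}+2$ leaves, and the count that the skeleton then has $n-\ell-1\le \frac{3n}{4}-3$ edges. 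But after spending up to $\frac{3n}{4}-3$ distinct colors on the skeleton, the budget for reaching $\frac{3n-1}{4}$ leaves room for only about two additional colors with which to color all $\ell\ge \frac{n}{4}+2$ pendant edges so that \emph{every} leaf--leaf and leaf--internal pair still gets a rainbow path. Asserting that this can be done ``spending only a bounded number of additional colors'' is not an argument; it is a restatement of what must be proved.

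The gap is structural, not merely one of missing detail. Since pendant edges vastly outnumber the spare colors, massive reuse of colors among them is forced; then for two leaves $u$ and $v$ whose parents are joined by a long skeleton path, the colors reused on their pendant edges must avoid each other and every color on that path, and conflicts along tree paths alone are unavoidable. Your fallback is to reroute through the two extra neighbors that each leaf has in $G$, but those extra edges are non-tree edges which themselves must receive colors, and the rerouted walk must be rainbow as a whole --- this sets up exactly the kind of global, circular constraint system that the real proof has to resolve (by a careful grouping of leaves, an inductive contraction, or some other device), none of which you specify. The existence of $3$-regular graphs with $rc(G)=\frac{3n-10}{4}$, mentioned in the paper, shows how little slack there is: any scheme that loses even a constant factor more than necessary in the pendant stage fails. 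As it stands, your text is a plausible research plan assembled from correct ingredients, but the theorem remains unproved in it.
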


Chandran et al. studied the rainbow connection number of a graph by
means of connected dominating sets. A dominating set $D$ in a graph
$G$ is called a $two$-$way\ dominating\ set$ if every pendant vertex
of $G$ is included in $D$. In addition, if $G[D]$ is connected, we
call $D$ a $connected\ two$-$way\ dominating\ set.$

\begin{theorem}\cite{chan}
If $D$ is a connected two-way dominating set of a graph $G$, then
$$rc(G)\leq rc(G[D])+3.$$
\end{theorem}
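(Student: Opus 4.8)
The plan is to start from an optimal rainbow coloring of $G[D]$ with $r=rc(G[D])$ colors and extend it to all of $G$ using only three additional (fresh) colors, which I will call $a,b,c$. Two structural consequences of the hypotheses drive everything. First, because $D$ is dominating, every vertex of $S:=V(G)\setminus D$ is adjacent to a vertex of $D$, so $S$ is a single layer lying at distance $1$ from $D$. Second, because $D$ is two-way dominating, no pendant vertex of $G$ lies in $S$, so every vertex of $S$ has degree at least $2$ in $G$. The routing template I will use throughout is this: for vertices $u,w$ with chosen neighbors $u',w'\in D$ (taking $u'=u$ when $u\in D$), concatenate the edge $uu'$, a rainbow $u'$-$w'$ path inside $G[D]$, and the edge $w'w$. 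Since $G[D]$ is connected and rainbow colored such an internal rainbow path always exists, and because the two outer edges carry fresh colors disjoint from $\{1,\dots,r\}$, the concatenation is a rainbow $u$-$w$ path as soon as the colors of $uu'$ and $w'w$ differ.

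First I would dispose of the easy pairs. Any two vertices of $D$ are rainbow connected inside $G[D]$, and for a pair with exactly one endpoint in $S$ a single fresh-colored edge appended to a $G[D]$ rainbow path suffices, using the fact that the $S$-endpoint has a direct edge into $D$. The real content is connecting two vertices of $S$, and for this I want to arrange the coloring of the edges incident to $S$ so that every $v\in S$ can reach $D$ along two routes whose first fresh edges receive different colors. When $v$ has at least two neighbors in $D$, I simply color two of these edges $a$ and $b$. When $v$ has a unique neighbor $d\in D$, I color $vd$ with $a$ and exploit the degree condition: $v$ has a neighbor $v'\in S$, and I reserve a second, differently colored route for $v$ that leaves through $v'$. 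Given such a coloring, any two vertices $u,w\in S$ can be assigned first edges of distinct fresh colors, and the routing template then yields a rainbow $u$-$w$ path.

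I expect the detour routing to be the main obstacle, and it is exactly the point where the third color earns its place. When both $u$ and $w$ must escape $S$ through an $S$-neighbor, the connecting walk acquires extra edges inside $S$ before it ever reaches $D$, and keeping the whole concatenation rainbow forces me to coordinate the colors on the $S$-$S$ edges with the colors on the $S$-$D$ edges; two fresh colors do not suffice to separate all of these simultaneously, whereas three do. I would therefore spend the bulk of the argument on a careful, case-based assignment of $a,b,c$ to the edges incident to $S$, verifying rainbowness of the resulting $u$-$w$ paths in each configuration. It is also worth stressing where the two-way hypothesis is indispensable: a pendant vertex of $G$ lying in $S$ would have its unique edge colored in a forced way, and a family of such pendants attached near a common vertex of $D$ could only be mutually rainbow connected by using arbitrarily many colors, so the bound would collapse without it.
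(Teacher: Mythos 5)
First, a point of reference: the paper does not prove this statement at all --- it is Theorem 5, quoted from reference \cite{chan} (Chandran, Das, Rajendraprasad, Varma) and used later as a black box (e.g.\ in Remark 1). So your proposal can only be judged against the known proof of the cited result, whose overall architecture you have in fact reproduced correctly: rainbow-color $G[D]$ optimally, spend three fresh colors on the edges incident to $S=V(G)\setminus D$, note that domination gives every vertex of $S$ a neighbor in $D$ while the two-way condition gives it degree at least $2$, and route every pair through $G[D]$ via ``exits'' whose fresh colors are disjoint. The easy pairs are handled exactly as you say.

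The genuine gap is in the one place you chose to defer, and your stated rule there is not just incomplete but unrepairable. You color the unique $D$-edge of \emph{every} vertex $v\in S$ having only one neighbor in $D$ with the same color $a$. Consider the graph with $D=\{d_1,d_2\}$, $d_1d_2\in E(G)$, vertices $u_1,u_2$ adjacent to each other and each adjacent only to $d_1$, and vertices $w_1,w_2$ adjacent to each other and each adjacent only to $d_2$. Here $D$ is a connected two-way dominating set and all four vertices of $S$ have a unique $D$-neighbor, so your rule colors all four $S$--$D$ edges with $a$. Every $u_1$--$w_1$ path must cross from $\{u_1,u_2\}$ into $D$ and later from $D$ into $\{w_1,w_2\}$, and both crossing edges are colored $a$; hence no $u_1$--$w_1$ path is rainbow, no matter how you color the remaining edges $u_1u_2$ and $w_1w_2$. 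The missing idea --- the actual heart of the cited proof --- is that the direct edges of such vertices must be colored \emph{non-uniformly}, coordinated globally: let $S_2\subseteq S$ be the vertices with a unique $D$-neighbor, take a spanning forest of $G[S_2]$, bipartition each nontrivial component into sides $X$ and $Y$, color direct edges of $X$-vertices $1$, of $Y$-vertices $2$, and forest edges $3$; a vertex isolated in $G[S_2]$ has (by the degree condition) a neighbor in $S$ with two $D$-edges colored $1$ and $2$, so give it direct color $1$ and a color-$3$ edge to that neighbor. Then every vertex of $S$ owns two exits with color sets among $(\{1\},\{2\})$, $(\{1\},\{2,3\})$, $(\{2\},\{1,3\})$, any two of which admit a disjoint choice, and the routing template closes the argument. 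Without this bipartition step your plan collapses exactly on the configuration above.
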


Let $G$ be a graph. The $eccentricity$ of a vertex $u$, written as
$\epsilon_{G}(u)$, is defined as $\max\{d_{G}(u,v)\,|\;v\in V(G)\}$.
The $radius$ of a graph, written as $rad(G)$, is defined as
$\min\{\epsilon_{G}(u)\;|\;u\in V(G)\}$. A vertex $u$ is called a
$center$ of a graph $G$ if $\epsilon_{G}(u)=rad(G)$.

Basavaraju et al. evaluated the rainbow connection number of a graph
by its radius and {\it chordality (size of a largest induced
cycle)}, and presented the following theorem.

\begin{theorem}\cite{bas}
For every bridgeless graph $G$,
$$rc(G)\leq rad(G)\zeta(G),$$
where $\zeta(G)$ is the size of a largest induced cycle of the graph
$G$.\end{theorem}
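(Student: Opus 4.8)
The plan is to exhibit, for a bridgeless graph $G$ with $r=rad(G)$ and $\zeta=\zeta(G)$, an explicit edge-coloring with at most $r\zeta$ colors under which every pair of vertices is joined by a rainbow path. First I would reduce to a convenient spanning subgraph: if $H$ is a spanning connected subgraph of $G$, then any rainbow coloring of $H$ extends to $G$ by giving every edge outside $H$ an already-used color, so $rc(G)\le rc(H)$. Hence it suffices to build a well-structured spanning subgraph together with a good coloring. Fix a center $u$, so $\epsilon_G(u)=r$, and partition the vertices into distance layers $V_0=\{u\},V_1,\dots,V_r$, where $V_i=\{v:d_G(u,v)=i\}$. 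I would reserve $r$ pairwise disjoint palettes $S_1,\dots,S_r$, each of size $\zeta$, so the total number of colors is $r\zeta$, and arrange that every edge I color at ``level $i$'' (between $V_{i-1}$ and $V_i$, or inside $V_i$) receives a color from $S_i$ only.

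The coloring would be built level by level, maintaining the invariant that, using only the colors $S_1\cup\cdots\cup S_i$, every two vertices of $V_0\cup\cdots\cup V_i$ are rainbow connected and each has a rainbow path to $u$. To pass from level $i-1$ to level $i$, I would attach the vertices of $V_i$ not by a bare BFS tree but by exploiting bridgelessness: since $G$ has no bridge, every edge lies on a cycle, and by repeatedly passing to shortest cycles, which are chordless, the relevant connections can be routed along induced cycles, each of length at most $\zeta$. Coloring the new edges introduced at level $i$ with distinct colors from the $\zeta$-palette $S_i$ then guarantees that each such short cycle, and hence each newly created cross- or back-connection, is traversed by a rainbow subpath. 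The concatenation of paths across different levels stays rainbow automatically, because the palettes $S_1,\dots,S_r$ are disjoint: to connect arbitrary $v\in V_j$ and $w\in V_k$ one lifts both vertices toward $u$ through successively lower-indexed layers and joins them in a common region, and the portion lying at level $t$ uses only colors of $S_t$, so no color can repeat across levels.

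The hard part will be proving that a single level can always be handled with only $\zeta$ fresh colors. The naive idea of keeping a BFS tree and connecting two layer-$i$ vertices by going up to $V_{i-1}$ and back down fails, since two tree edges descending to the same layer would have to receive different colors to keep the ``up-and-over'' path rainbow, which can force far more than $\zeta$ colors. The role of bridgelessness together with the chordality bound is precisely to replace these costly detours by traversals of induced cycles of length at most $\zeta$, so that $\zeta$ colors per level suffice; making this routing simultaneously consistent for all pairs appearing at a given level is the technical heart of the argument. A second subtlety, which forces the construction to be global rather than an induction on $rad(G)$, is that deleting the outermost layer can create bridges and can shrink distances, so one cannot simply peel off $V_r$ and recurse on a smaller-radius bridgeless graph; instead the layered coloring must be produced in a single pass on $G$ itself, with the invariant above carried from the center outward.
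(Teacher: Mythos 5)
Your write-up is a proof plan, not a proof: the step you yourself label ``the technical heart of the argument''---that a single level can always be handled with only $\zeta$ fresh colors---is exactly where the whole difficulty of the theorem sits, and you never supply it. Worse, the scaffolding built around that missing step is itself unsound. Your invariant demands that after stage $i$ any two vertices of the ball $B_i=V_0\cup\cdots\cup V_i$ be joined by a rainbow path using only colors from $S_1\cup\cdots\cup S_i$; since the edges carrying those colors all lie inside $B_i$, this amounts to rainbow connecting $G[B_i]$ within itself using $i\zeta$ colors. That is impossible in general, because two vertices of the same layer may be connectable only through \emph{higher} layers. Concretely, take $G=K_{2,n}$ with $n$ large: every vertex is a center, $rad(G)=2$ and $\zeta(G)=4$. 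Choosing the center $u$ to be one of the two hubs, $G[B_1]$ is the star $K_{1,n}$, in which the only path between two leaves $x,y$ is $x,u,y$; rainbow connecting $B_1$ inside $B_1$ therefore forces all $n$ star edges to receive distinct colors, while your palette $S_1$ has only $4$. In $G$ itself the leaves are rainbow connected only through the other hub, which lies in $V_2$ and is off-limits at stage $1$ of your induction. So the invariant fails at the very first step, and no routing along induced cycles can restore it, because the cycles needed leave the ball.

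The published proof of this theorem (Basavaraju, Chandran, Rajendraprasad and Ramaswamy \cite{bas}---note that the present paper only quotes the result, it does not prove it) is organized precisely to dodge this obstruction: instead of distance layers it grows a \emph{connected $k$-step dominating set} $D$ outward from a center. In each round, every vertex $v$ at distance $1$ from $D$ is covered by an ear, i.e.\ a path with both ends in $D$ and interior outside $D$ passing through $v$; such an ear of length at most $\zeta(G)$ exists because in a bridgeless graph the shortest cycle through the edge joining $v$ to $D$ is induced, hence has length at most $\zeta(G)$. All ears of a round are colored with the same fresh palette $1,\dots,\zeta$ in order along the ear, so each newly absorbed vertex acquires two rainbow paths into $D$ whose color sets are a prefix and the complementary suffix of the palette; for any two new vertices one can choose a prefix from one and a suffix from the other that are disjoint. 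That prefix/suffix trick resolves the ``simultaneous consistency'' problem you flagged and left open, and it works only because ears may wander through vertices arbitrarily far from the center---in the $K_{2,n}$ example the first round's ears $u,x,w,y,u$ already pass through the far hub $w$, so the grown sets are not balls. After $rad(G)$ rounds the set is all of $V(G)$ and at most $rad(G)\zeta(G)$ colors have been used. If you wish to salvage your approach, replace ``ball plus layered palettes'' by ``connected $k$-step dominating set plus ears''; the per-round palette reuse you wanted then becomes legitimate.
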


They also showed that the above result is best possible by
constructing a kind of tight examples.

Chakraborty et al. investigated the hardness and algorithms for the
rainbow connection number, and showed the following theorem.
\begin{theorem}\cite{chak} Given a graph $G$, deciding if $rc(G)=2$ is NP-Complete. In
particular, computing $rc(G)$ is NP-Hard.
\end{theorem}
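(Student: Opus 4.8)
The plan is to establish the statement in two parts: membership of the decision problem ``$rc(G)=2$'' in NP, and its NP-hardness, with the NP-hardness of computing $rc(G)$ following as an immediate corollary.

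For membership in NP, I would take a $2$-edge-coloring $c$ of $G$ as the certificate. The key observation is that in a graph colored with only two colors every rainbow path has length at most $2$; hence a verifier need only, for each of the $O(n^2)$ pairs $(u,v)$ of non-adjacent vertices, scan their common neighbors $w$ and accept the pair as soon as it finds one with $c(uw)\neq c(wv)$. Together with the trivial check that $G$ is not complete (so that $rc(G)\neq 1$), this certifies $rc(G)=2$ in polynomial time, placing the problem in NP.

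For NP-hardness, I would design a polynomial reduction from a suitable NP-complete problem --- $3$-SAT, or a not-all-equal / hypergraph $2$-coloring problem, being the natural starting points --- to the question of whether a constructed graph admits a rainbow $2$-coloring. The guiding principle is that, once $G$ is forced to have diameter $2$ (note $rc(G)=2$ already forces $diam(G)\le 2$, and $G$ non-complete forces $diam(G)=2$), the only coloring constraints are of the form ``the non-adjacent pair $(u,v)$ must have a common neighbor $w$ with $c(uw)\neq c(wv)$.'' Reading each edge color as a Boolean value, such a constraint is precisely a disjunction of inequality (XOR-type) conditions on pairs of edge variables; so I would build variable gadgets whose edge colors encode a truth assignment and clause gadgets whose distance-$2$ pairs are rainbow-connectible exactly when the corresponding clause is satisfied, and then argue that $rc(G)=2$ holds if and only if the underlying formula is satisfiable.

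The main obstacle is the gadget engineering: I must simultaneously (i) guarantee that the host graph has diameter exactly $2$, so that no unintended short path can rescue an otherwise-unsatisfied constraint and short-circuit the encoding, and (ii) ensure that every pair carrying no logical meaning is automatically rainbow-connected under any assignment, so that it imposes no spurious constraint. Balancing these two pulls --- local constraints tight enough to force the intended logic, yet a global structure loose enough to keep the diameter at $2$ and leave the encoding variables genuinely free --- is the delicate heart of the argument. Finally, the NP-hardness of computing $rc(G)$ is then immediate, since a polynomial algorithm returning the value $rc(G)$ would decide $rc(G)=2$ in polynomial time.
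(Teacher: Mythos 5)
First, a point of comparison: the paper you are working against does not prove this statement at all --- it is quoted verbatim from Chakraborty, Fischer, Matsliah and Yuster \cite{chak} as background, so there is no in-paper proof to measure you against; your proposal has to be judged against what a proof of this theorem actually requires.

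Your NP-membership argument is correct and complete: with two colors every rainbow path has length at most $2$, so a $2$-edge-coloring certificate is verifiable by checking, for each nonadjacent pair, that some common neighbor is joined to the two vertices by differently colored edges. The genuine gap is the hardness half, which is the entire content of the theorem. What you give there is a statement of intent, not a proof: you name candidate source problems ($3$-SAT, hypergraph $2$-coloring), you correctly translate the rainbow condition into XOR-type constraints on edge variables, and then you say you \emph{would} build variable and clause gadgets --- while explicitly conceding that making this work (forcing diameter $2$, killing unintended short paths, and leaving the non-logical pairs unconstrained) is ``the delicate heart of the argument.'' That heart is missing, and it is exactly where the real difficulty lies. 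In \cite{chak} the tension you identify is resolved not by a single monolithic gadget but by a chain of reductions: one first proves hardness of an intermediate \emph{subset rainbow connection} problem (only a prescribed set $P$ of pairs must be rainbow connected, using $2$ colors), which removes the need to worry about meaningless pairs, and one then reduces that subset version to deciding $rc(G)=2$ by augmenting the graph so that every pair outside $P$ becomes automatically rainbow connected under essentially any coloring while the pairs in $P$ keep their constraining role. Without carrying out some such construction and proving both directions of the equivalence (satisfiable instance $\Rightarrow$ a rainbow $2$-coloring exists, and conversely), the proposal identifies the obstacle rather than overcoming it, so NP-hardness is not established. The final remark --- that hardness of computing $rc(G)$ follows immediately from hardness of the decision problem --- is fine, but it inherits the gap.
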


It is well-known that almost all graphs have diameter $2$. So, it is
interesting to know the best upper bound of $rc(G)$ for a graph $G$
with diameter 2. Clearly, the best lower bound of $rc(G)$ for such a
graph $G$ is 2. In this paper, we give the upper bound of the
rainbow connection number of a graph with diameter $2$. We show that
if $G$ is a bridgeless graph with diameter $2$, then $rc(G)\leq 5$,
and that $rc(G)\leq k+2$ if $G$ is a connected graph of diameter $2$
with $k$ bridges, where $k\geq 1$.

The end of each proof is marked by a $\Box$. For a proof consisting
of several claims, the end of the proof of each claim is marked by a
$\bigtriangleup$.

\section{Main results}

We need some notations and terminology first. Let $G$ be a graph.
The $k$-$step\ open\linebreak neighbourhood$ of a vertex $u$ in $G$
is defined by $N^k_{G}(u)=\{v\in V(G)\;|\;d_{G}(u,v)=k\}$ for $0\leq
k\leq\ diam(G)$. We write $N_{G}(u)$ for $N^1_{G}(u)$ simply. Let
$X$ be a subset of $V(G)$, and denote by $N^k_{G}(X)$ the set $\{u\
|\ d_{G}(u,X)=k, u\in V(G)\}$, where $d_{G}(u,X)=\min \{d_{G}(u,x)\
|\ x\in X\}$. For any two subsets $X, Y$ of $V(G)$, $E_{G}[X,Y]$
denotes the set $\{xy\;|\; x\in X,y\in Y,xy\in E(G)\}$. Let $c$ be a
rainbow edge-coloring of $G$. If an edge $e$ is colored by $i$, we
say that $e$ is an $i$-$color\ edge$. Let $P$ be a rainbow path. If
$c(e)\in \{i_1,i_2,\ldots,i_r\}$ for any $e\in E(P)$, then $P$ is
called an $\{i_1,i_2,\ldots,i_r\}$-$rainbow\ path$. Let $X_1,
X_2,\ldots\, X_k$ be disjoint vertex subsets of $G$. Notation
$X_1-X_2-\cdots-X_k$ means that there exists some desired rainbow
path $P=(x_1,x_2,\ldots,x_k)$, where $x_i\in X_i,\ i=1,2,\ldots,k$.

\begin{theorem}
Let $G$ be a connected graph of diameter $2$ with $k\geq 1$ bridges.
Then $rc(G)\leq k+2.$
\end{theorem}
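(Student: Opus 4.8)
The plan is to first pin down the rigid structure that diameter $2$ forces on the bridges, and then to exhibit an explicit edge-coloring with exactly $k+2$ colors.

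\textbf{Structural reduction.} First I would show that every bridge of $G$ is a pendant edge. If $e=xy$ is a bridge and both sides of $G-e$ contained a second vertex, then choosing $x'\neq x$ on $x$'s side and $y'\neq y$ on $y$'s side forces every $x'$–$y'$ path through $e$, so $d_G(x',y')\geq 3$, contradicting $diam(G)=2$. Hence one endpoint of each bridge is a leaf, and conversely each leaf edge is a bridge, so the $k$ bridges are exactly $k$ pendant edges with leaves $u_1,\dots,u_k$. Next, for two leaves $u_i,u_j$ with neighbours $w_i,w_j$, the bound $d_G(u_i,u_j)=1+d_G(w_i,u_j)\leq 2$ gives $d_G(w_i,u_j)\leq 1$, which (using simplicity and connectivity) forces $w_i=w_j$; thus all leaves share one neighbour $v$. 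Finally, for any non-leaf $w$, $d_G(u_1,w)=1+d_G(v,w)\leq 2$ yields $d_G(v,w)=1$, so $v$ is adjacent to every vertex of $H:=G-\{u_1,\dots,u_k\}$, and since $w$ is not a leaf of $G$ it has a neighbour other than $v$, which necessarily lies in $H':=H-v$. In particular $H'$ has no isolated vertex.

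\textbf{The coloring.} Since $u_i$ and $u_j$ are joined only by the path $u_i-v-u_j$, the $k$ pendant edges are \emph{forced} to receive pairwise distinct colors, so I set $c(vu_i)=i$. For the two remaining colors I take a \emph{maximum cut} $(A,B)$ of $H'$ and put $c(vw)=k+1$ for $w\in A$ and $c(vw)=k+2$ for $w\in B$, while every edge inside $H'$ receives color $1$ (which exists as $k\geq 1$). If $H'$ is empty then $G$ is the star $K_{1,k}$ with $rc(G)=k$, so I may assume $H'$ is nonempty. The decisive gain from choosing a maximum cut is that, because $H'$ has minimum degree at least $1$, each vertex of $H'$ has at least one neighbour across the cut.

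\textbf{Verification and the main obstacle.} It remains to produce a rainbow path for every pair. The pairs $u_i,u_j$ and $u_i,v$ are immediate, while $u_i,w$ with $w\in H'$ uses $u_i-v-w$, whose colors $i$ and $c(vw)\in\{k+1,k+2\}$ differ. For non-adjacent $w,w'\in H'$ (hence at distance $2$ through $v$): if $c(vw)\neq c(vw')$ the path $w-v-w'$ is rainbow. The hard case is $c(vw)=c(vw')$, since such a same-class pair cannot be separated through $v$; here I route $w-v-z-w'$ through a cross-cut neighbour $z$ of $w'$, so the three edges carry the distinct colors $\{k+1,k+2,1\}$. This is exactly the step where the maximum-cut choice is needed, as it guarantees the detour vertex $z$ exists. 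All pairs are thus rainbow connected using the $k+2$ colors $1,\dots,k+2$, giving $rc(G)\leq k+2$.
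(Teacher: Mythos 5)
Your proof is correct and takes essentially the same route as the paper: both arguments reduce to a universal cut vertex $v$ whose pendant edges are exactly the $k$ bridges, give the bridges $k$ distinct colors, split the remaining neighbors of $v$ into two classes colored $k+1$ and $k+2$, reuse color $1$ inside, and handle same-class pairs by the detour $w-v-z-w'$ through a cross-class neighbor. The only immaterial difference is that you obtain the bipartition with the cross-neighbor property from a maximum cut of $H'$, whereas the paper uses the bipartition defined by a spanning forest of the nontrivial components of $G-v$.
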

\begin{proof}
$G$ must have a cut vertex, say $v$, since $G$ has bridges.
Furthermore, $v$ must be the only cut vertex of $G$, and the common
neighbor of all other vertices due to $diam(G)=2$. Let $G_1,
G_2,\ldots,G_r$ be the components of $G-v$. Without loss of
generality, assume that $G_1, G_2,\ldots,G_k$ are the all trivial
components of $G-v$. We consider the following two cases to complete
this proof.

{\bf Case $1.$} $k=r.$

In this case, we provide each bridge with a distinct color from
$\{1,2,\ldots,k\}$. It is easy to see that this is a rainbow
edge-coloring. Thus $rc(G)\leq k\leq k+2$.

{\bf Case $2.$} $k<r.$

In this case, first provide each bridge with a distinct color, and
denote by $c_1$ this edge-coloring. Next color the other edges as
follows. Let $F$ be a spanning forest of the disjoint union
$G_{k+1}+G_{k+2}+\cdots+G_{r}$ of $G_{k+1},G_{k+2},\ldots,G_{r}$,
and let $X$ and $Y$ be any one of the bipartition defined by this
forest $F$. We provide a $3$-edge-coloring $c_2:
E(G_{k+1}+G_{k+2}+\cdots+G_{r})\rightarrow \{1,k+1,k+2\}$ of $G$
defined by
$$c_2(e)=\left\{\begin{array}{ll} k+1, & if\ e\in E[v,X];\\
 k+2, & if\ e\in E[v,Y];\\ 1, & otherwise.
 \end{array}\right.$$

We show that the edge-coloring $c_1\cup c_2$ is a rainbow
edge-coloring of $G$ in this case. Pick any two distinct vertices
$u$ and $w$ in $V(G)$. If one of $u$ and $w$ is $v$, then $u-w$ is a
rainbow path. If at least one of $u$ and $w$ is a trivial component
of $G-v$, then $u,v,w$ is a rainbow path connecting $u$ and $w$.
Thus we suppose $u,w\in X\cup Y$. If $u\in X$ and $w\in Y$, or $w\in
X$ and $u\in Y$, then $u,v,w$ is a rainbow path connecting $u$ and
$w$. If $u,w\in X$, or $u,w\in Y$, without loss of generality,
assume $u,w\in X$. Pick $z\in Y$ such that $uz\in E[F]$. Thus
$u,z,v,w$ is a rainbow path connecting $u$ and $w$. So $rc(G)\leq
k+2$.

By this all possibilities have been exhausted and the proof is thus
complete.
\end{proof}

\noindent{\bf Tight examples:} The upper bound of Theorem $7$ is
tight. The graph $(kK_1\cup rK_2)\vee {v}$ has a rainbow connection
number achieving this upper bound, where $k\geq 1,r\geq 2$.

\begin{proposition}
Let $G$ be a bridgeless graph with order $n$ and diameter $2$. Then
$G$ is either $2$-connected, or $G$ has only one cut vertex $v$.
Furthermore, $v$ is the center of $G$ with radius $1$.
\end{proposition}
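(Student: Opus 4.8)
The plan is to argue by a dichotomy on $2$-connectivity. Since $diam(G)=2$, the graph $G$ is connected and has $n\geq 3$ vertices, so if $G$ has no cut vertex it is $2$-connected and the first alternative holds. I may therefore assume that $G$ is not $2$-connected, so that it has at least one cut vertex $v$. The whole task then reduces to proving two facts about $v$: that it is adjacent to every other vertex of $G$ (which immediately gives $\epsilon_G(v)=1$, hence $rad(G)=1$ and $v$ a center), and that it is the only cut vertex.

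The key step is to show that any cut vertex $v$ of a diameter-$2$ graph must dominate $G$. Let $G_1,\ldots,G_t$ be the components of $G-v$, where $t\geq 2$ because $v$ is a cut vertex, and pick $x\in G_i$ and $y\in G_j$ with $i\neq j$. Since $v$ separates $G_i$ from $G_j$, every path connecting $x$ and $y$ must pass through $v$, whence $d_G(x,y)=d_G(x,v)+d_G(v,y)$. As $diam(G)=2$ we have $d_G(x,y)\leq 2$, while each of the two summands on the right is at least $1$; therefore $d_G(x,v)=d_G(v,y)=1$. Letting $x$ range over all of $V(G-v)$ while keeping $y$ in some other component (which exists since $t\geq 2$) shows that every vertex of $G$ is adjacent to $v$, so $\epsilon_G(v)=1$.

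Once $v$ is adjacent to all other vertices, uniqueness follows at once. For any $w\neq v$, the vertex $v$ remains adjacent to every vertex of $G-w$, so $G-w$ contains the spanning star with center $v$ and is connected; thus $w$ is not a cut vertex. Hence $v$ is the unique cut vertex of $G$, and combined with $\epsilon_G(v)=1$ this yields $rad(G)=1$ and identifies $v$ as a center, establishing the second alternative.

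I expect the only real obstacle to be the key step: one must justify the additive distance identity $d_G(x,y)=d_G(x,v)+d_G(v,y)$ for vertices lying in different components of $G-v$, and then be careful to conclude that \emph{every} vertex of $G$, and not merely the vertices of two fixed components, is a neighbor of $v$. The subsequent uniqueness argument is routine; I note in passing that neither it nor the domination step actually invokes the bridgeless hypothesis, so the stated conclusion in fact holds for every connected graph of diameter $2$.
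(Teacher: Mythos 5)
Your proof is correct, and its key step is the same as the paper's: a cut vertex $v$ of a diameter-$2$ graph must dominate $G$, because distances between vertices in different components of $G-v$ add up across $v$ (the paper phrases this contrapositively: a vertex of $G_1$ not adjacent to $v$ would be at distance at least $3$ from every vertex of $G_2$). Where you genuinely differ is in the logical order, and your order is the sounder one. The paper first asserts ``Since $diam(G)=2$, $G$ has only one cut vertex'' with no justification, and only afterwards proves the domination property; but uniqueness of the cut vertex does not follow from $diam(G)=2$ by itself --- it follows from domination (as you show: if $v$ is adjacent to every other vertex, then $G-w$ contains a spanning star centered at $v$ for any $w\neq v$, so no other vertex can be a cut vertex). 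Your proof proves domination first and then deduces uniqueness, thereby supplying exactly the deduction the paper omits. Your closing remark that bridgelessness is never used is also correct: the proposition holds for every connected graph of diameter $2$, which is consistent with the paper invoking the same fact in the proof of Theorem 7 for graphs that do have bridges.
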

\begin{proof}
Let $G$ be a bridgeless graph with diameter $2$. Suppose that $G$ is
not $2$-connected, that is, $G$ has a cut vertex. Since $diam(G)=2$,
$G$ has only one cut vertex, say $v$. Let $G_1,G_2,\ldots,G_k$ be
the components of $G-v$ where $k\geq 2$. If some vertex, without
loss of generality, say $u\in V(G_1)$, is not adjacent to $v$, then
$d_{G}(u,w)\geq 3$ for any $w\in V(G_2)$. This conflicts with the
fact that $diam(G)=2$. So $v$ is the center of $G$ with radius $1$.
\end{proof}

\begin{lemma}
Let $G$ be a bridgeless graph with diameter $2$. If $G$ has a cut
vertex, then $rc(G)\leq 3.$
\end{lemma}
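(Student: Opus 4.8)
The plan is to exploit the rigid structure supplied by Proposition~1: the cut vertex $v$ is unique and is a center of radius $1$, so $v$ is adjacent to every other vertex of $G$, and $G-v$ splits into components $G_1,\dots,G_k$ with $k\ge 2$. First I would record a consequence of bridgelessness: no $G_i$ can be a single vertex, since a component $\{u\}$ of $G-v$ would force the edge $uv$ to be a bridge. Hence every $G_i$ contains at least one edge, and in particular each vertex of $G_i$ has a neighbour inside $G_i$.

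The colouring I would use has three colours. In each component $G_i$ fix a spanning tree $T_i$ and let $(X_i,Y_i)$ be its (unique) bipartition. I then colour every edge from $v$ to $\bigcup_i X_i$ with colour $1$, every edge from $v$ to $\bigcup_i Y_i$ with colour $2$, and every edge lying inside a component with colour $3$ (the tree edges in particular, but also any remaining chords, whose colour is irrelevant to the argument). Since $v$ is universal, this assigns a colour to every edge of $G$.

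The verification then runs over the types of vertex pairs $u,w$. If one of them is $v$, the single edge $uw$ is already a rainbow path. If $u$ and $w$ lie on opposite sides, i.e. one in some $X_i$ and the other in some $Y_j$ (whether or not $i=j$), then $u,v,w$ uses colours $1$ and $2$ and is rainbow. The remaining, and genuinely delicate, case is when $u$ and $w$ sit on the same side, i.e. both in $X$-parts or both in $Y$-parts; then $u,v,w$ repeats the colour on its two $v$-edges and fails. Here I would reroute through a tree-neighbour: if both are in $X$-parts, pick a $T_i$-neighbour $z$ of $u$ (which lies in $Y_i$ and exists by the size remark above), and take $u,z,v,w$, whose edges carry colours $3,2,1$; the symmetric detour with colours $3,1,2$ handles two $Y$-part vertices.

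The main obstacle to watch is precisely this same-side case: one must be sure the detour vertex $z$ always exists (guaranteed by bridgelessness ruling out trivial components) and that $u,z,v,w$ is a genuine path on four distinct vertices whose three edge-colours are pairwise distinct. Checking that $z\ne w$ (immediate, since $z$ and $w$ lie on opposite sides, or in distinct components) and that the colours $\{1,2,3\}$ never collide along this detour is the crux; everything else is routine. Concluding that the displayed $3$-colouring is a rainbow edge-colouring then yields $rc(G)\le 3$.
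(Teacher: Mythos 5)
Your proposal is correct and is essentially the proof the paper intends: the paper's remark says the lemma follows by "a similar argument for Theorem 7," and your colouring (spanning forest bipartition of the nontrivial components of $G-v$, two colours on the star at the cut vertex, one colour inside the components, with the detour $u,z,v,w$ through a tree-neighbour for same-side pairs) is exactly that argument specialized to the bridgeless case with three colours. The only step you add explicitly, and rightly so, is that bridgelessness rules out trivial components of $G-v$, which is what guarantees the detour vertex $z$ exists.
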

\begin{remark}
This lemma can be proved by a similar argument for Theorem $7$. It
can also be derived from Theorem $5$.
\end{remark}
\begin{lemma}
Let $G$ be a $2$-connected graph with diameter $2$. Then $rc(G)\leq
5.$
\end{lemma}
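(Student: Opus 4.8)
The plan is to fix one vertex $v$ and colour the edges so that almost every pair of vertices is joined by a rainbow path through $v$, keeping a few spare colours to repair the pairs that $v$ cannot serve. Because $diam(G)=2$, we have $V(G)=\{v\}\cup N_1\cup N_2$ with $N_1=N_G(v)$ and $N_2=N^2_G(v)$; and because $G$ is $2$-connected, $G-v$ is connected, every vertex has degree at least $2$, and each $y\in N_2$ has a neighbour in $N_1$ (the middle vertex of a shortest path from $v$ to $y$). If $N_2=\emptyset$ then $v$ is universal, so $\{v\}$ is a connected two-way dominating set (there are no pendant vertices) and Theorem $5$ already gives $rc(G)\le rc(G[\{v\}])+3=3$. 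Hence I may assume $rad(G)=2$, i.e.\ $N_2\neq\emptyset$; it is also natural to take $v$ of maximum degree, so as to keep $N_2$ as thin as possible.

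For this case I would split the five colours into a hub pair $\{1,2\}$ and a repair triple $\{3,4,5\}$. Choose a bipartition $N_1=X\cup Y$, colour every edge joining $v$ to $X$ with $1$ and every edge joining $v$ to $Y$ with $2$; then $v$ reaches all of $N_1$ directly, and any $x\in X$, $x'\in Y$ are joined by the rainbow path $x-v-x'$. To reach $N_2$, for each $y\in N_2$ fix a neighbour $f(y)\in N_1$ and colour the edge $yf(y)$ with a colour of $\{3,4,5\}$ different from $c(vf(y))$, making $v-f(y)-y$ rainbow. The remaining edges, namely those inside $N_1$, the unselected edges between $N_1$ and $N_2$, and the edges inside $N_2$, are to be coloured from $\{3,4,5\}$ so as to furnish the missing rainbow detours.

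It then remains to check the surviving pair types: (i) two vertices of the same class of $N_1$; (ii) a vertex of $N_1$ with a non-adjacent vertex of $N_2$; and (iii) two vertices of $N_2$. For (ii) and (iii), $diam(G)=2$ supplies a common neighbour $z\in N_1\cup N_2$, and one only needs the two edges of $y-z-y'$ (resp.\ $x-z-y$) to receive distinct repair colours. For (i), say $x,x'\in X$: if they are adjacent, or have a common neighbour $z\neq v$, then the edge or the suitably coloured path $x-z-x'$ works; and if $x$ has a neighbour $a$ in the opposite class $Y$, then $x-a-v-x'$ is rainbow as soon as $c(xa)\in\{3,4,5\}$, its colours being $c(xa),2,1$. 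Hence the only dangerous pairs are non-adjacent same-class vertices whose unique common neighbour is $v$ and whose other neighbours all lie in $N_2$.

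The step I expect to be hardest is routing these dangerous pairs within only three repair colours. Such a pair can be joined only by a short path, of length $3$ or $4$, that dips into $N_2$ and returns; and the colours of all such detours, together with the edges $yf(y)$ and the common-neighbour edges demanded by cases (ii) and (iii), must be made simultaneously consistent over the palette $\{3,4,5\}$. This is what forces a careful, global choice: of the bipartition $X\cup Y$ (for instance via a locally optimal cut of $G[N_1]$, which already endows every non-isolated vertex of $G[N_1]$ with an opposite-class neighbour) and of the colouring of the edges meeting $N_2$. Here the $2$-connectivity of $G$ is indispensable, since it is what guarantees that a pair whose only common neighbour is $v$ possesses a $v$-avoiding path at all. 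I would expect to complete the argument by splitting into a small number of sub-configurations according to how the dangerous vertices attach to $N_2$, colouring the incident edges explicitly in each.
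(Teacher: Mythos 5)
Your setup is sound and you have correctly located the crux, but the proof stops exactly where the real work begins: the ``dangerous pairs'' (non-adjacent vertices of the same class of $N_1$ whose only common neighbour is $v$), together with the simultaneous distinct-colour demands of your cases (ii) and (iii), are never actually routed. Announcing that you ``would expect to complete the argument by splitting into a small number of sub-configurations'' is not a proof, and these constraints genuinely interact: a single vertex of $N_2$, or a single edge inside $N_1$ or inside $N_2$, serves many pairs at once, so the distinct-colour requirements cannot be granted pair by pair with only three repair colours. Moreover, your suggested device, a locally optimal cut of $G[N_1]$, only helps vertices that are non-isolated in $G[N_1]$; precisely the dangerous vertices, whose neighbours other than $v$ all lie in $N_2$, are untouched by it. So the choice you leave open --- which bipartition of $N_1$ and which colouring of the edges meeting $N_2$ --- is the entire problem, not a detail.

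The missing idea (and it is how the paper closes exactly this gap) is to derive the bipartition $X\cup Y$ of $N_G(v)$ from the structure of $N^2_G(v)$ rather than from $G[N_1]$. Let $B$ be the set of vertices of $N^2_G(v)$ having a neighbour inside $N^2_G(v)$. If $B\neq\emptyset$, take a spanning forest of $G[B]$ without isolated vertices, let $B_1,B_2$ be its bipartition, and assign each vertex of $N_G(v)$ to $X$ or $Y$ according to adjacency to $B_1$ or $B_2$ (a short diameter-$2$ argument places the vertices of $N_G(v)$ adjacent to neither). If $B=\emptyset$, build the auxiliary graph $H$ on vertex set $N_G(v)$, joining $x,y$ when they are linked in $G-v$ by a path of length at most $2$ internally avoiding $N_G(v)$; $2$-connectivity and $diam(G)=2$ make $H$ connected, and one takes the bipartition of a spanning tree of $H$. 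Either way, every $x\in X$ acquires a partner $y\in Y$ reachable by a $\{3,4,5\}$-rainbow path inside $G-v$, so a same-class pair $x,x'$ is joined by $x\rightsquigarrow y-v-x'$ with colours from $\{3,4,5\}$, then $2$, then $1$: exactly the escape route your dangerous pairs lack. Finally, one splits $N^2_G(v)\setminus B$ into $A$ (adjacent to both $X$ and $Y$), $D_1$ (only to $X$) and $D_2$ (only to $Y$), observes that diameter $2$ forces $D_1=\emptyset$ or $D_2=\emptyset$, and then a single explicit $5$-colouring (with a per-vertex trick on $E_G[X,D_1]$: one incident edge coloured $5$, the rest $4$) can be verified over all pair types. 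Without some such global mechanism your plan cannot be completed as stated.
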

\begin{proof} Pick a vertex $v$ in $V(G)$ arbitrarily. Let
$$B=\{u\in N^2_{G}(v)\,|\,there\ exists\ a\ vertex\
w\in N^2_{G}(v)\ such\ that\ uw\in E(G)\}.$$ We consider the
following two cases distinguishing either $B\neq\emptyset$ or
$B=\emptyset$.

{\bf Case $1.$} $B\neq\emptyset$.

In this case, the subgraph $G[B]$ induced by $B$ has no isolated
vertices. Thus there exists a spanning forest $F$ in $G[B]$, which
also has no isolated vertices. Furthermore, let $B_1$ and $B_2$ be
any one of the bipartition defined by this forest $F$. Now divide
$N_{G}(v)$ as follows.

Set $X,Y=\emptyset$. For any $u\in N_{G}(v)$, if $u\in N_G(B_1)$,
then put $u$ into $X$. If $u\in N_G(B_2)$, then put $u$ into $Y$. If
$u\in N_G(B_1)$ and $u\in N_G(B_2)$, then put $u$ into $X$. By the
above argument, we know that for any $x\in X$ ($y\in Y$), there
exists a vertex $y\in Y$ ($x\in X$) such that $x$ and $y$ are
connected by a path $P$ with length $3$ satisfying
$(V(P)\setminus\{x,y\})\subseteq B$.

We have the following claim for any $u\in N_{G}(v)\setminus (X\cup
Y)$.

\begin{claim} Let $u\in N_{G}(v)\setminus(X\cup Y)$. Then
either $u$ has a neighbor $w\in X$, or $u$ has a neighbor $w\in Y$.
\end{claim}
\noindent{\itshape Proof of Claim $1$.} Let $u\in
N_{G}(v)\setminus(X\cup Y)$. Pick $z\in B_1$, then $u$ and $z$ are
nonadjacent since $u\not\in X\cup Y$. Moreover, $diam(G)=2$, so $u$
and $z$ have a common neighbor $w$. We say that $w\not\in
N^2_{G}(v)$, otherwise, $w\in B$ and $u\in X\cup Y$, which
contradicts the fact that $u\not\in X\cup Y$. Moreover, we say that
$w\not\in N_{G}(v)\setminus(X\cup Y)$ by a similar argument. Thus
$w$ must be contained in $X\cup Y$.\hfill$\bigtriangleup$

By the above claim, for any $u\in N_{G}(v)\setminus(X\cup Y)$,
either we can put $u$ into $X$ such that $u\in N_G(Y)$, or we can
put $u$ into $Y$ such that $u\in N_G(X)$. Now $X$ and $Y$ form a
partition of $N_{G}(v)$.

For any $u\in N^2_{G}(v)\setminus B$, let
$$A=\{u\in N^2_{G}(v)\,|\,u\in N_{G}(X)\cap N_{G}(Y)\};$$
$$D_1=\{u\in N^2_{G}(v)\,|\,u\in N_{G}(X)\setminus N_{G}(Y);$$
$$D_2=\{u\in N^2_{G}(v)\,|\,u\in N_{G}(Y)\setminus N_{G}(X)\}.$$
We say that at least one of $D_1$ and $D_2$ is empty. Otherwise,
there exist $u\in D_1$ and $v\in D_2$ such that $d_{G}(u,v)\geq 3$,
which contradicts the fact that $diam(G)=2$. Without loss of
generality, assume $D_2=\emptyset$.

First, we provide a $5$-edge-coloring $c: E(G)\setminus
E_G[D_1,X]\rightarrow \{1,2,\ldots,5\}$ defined by
$$c(e)=\left\{\begin{array}{ll} 1, & if\ e\in E_{G}[v,X];\\
 2, & if\ e\in E_{G}[v,Y];\\ 3, & if\ e\in E_{G}[X,Y]\cup
 E_{G}[Y,A]\cup  E_{G}[B_1,B_2];\\ 4, & if\ e\in E_{G}[X,A]\cup E_{G}[X,B_1]
 ;\\5, & if\ e\in E_{G}[Y,B_2],\ or\ otherwise.
 \end{array}\right.$$

Next, we color the edges of $E_{G}[X,D_1]$ as follows. For any
vertex $u\in D_1$, color one edge incident with $u$ by $5$ (solid
lines), the other edges incident with $u$ are colored by $4$ (dotted
lines). See Figure $1$.

\vspace{12cm}
\scalebox{0.7}[0.7]{\includegraphics[0,0][10,10]{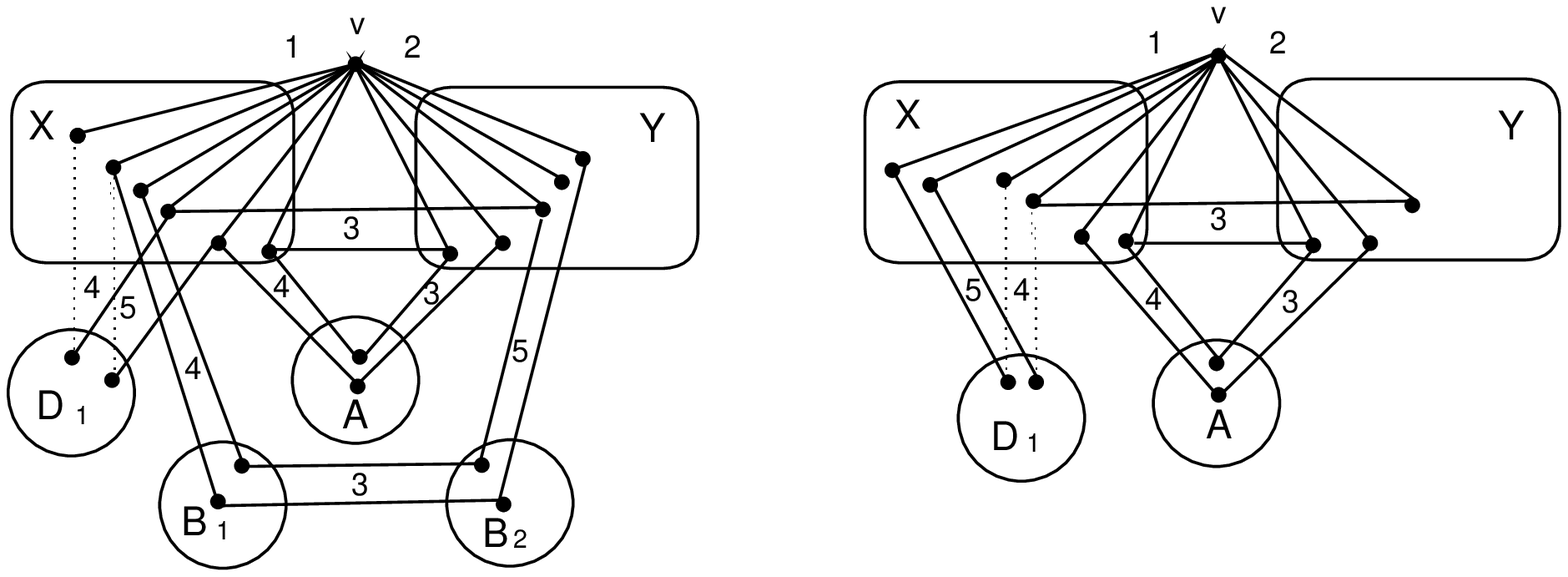}}

\vspace{-7cm} \centerline{Figure $1.$ \hspace{5cm} Figure $2$ }

We have the following claim for the above coloring.
\begin{claim}
$(i)$ For any vertex $x\in X$, there exists a vertex $y\in Y$ such
that $x$ and $y$ are connected by a $\{3,4,5\}$-rainbow path in
$G-v$.

$(ii)$ For any vertex $y\in Y$, there exists a vertex $x\in X$ such
that $x$ and $y$ are connected by a $\{3,4,5\}$-rainbow path in
$G-v$.

$(iii)$ For any $u,u'\in D_1$, there exists a rainbow path
connecting $u$ and $u'$.

$(iv)$ For any $u\in D_1$ and  $u'\in X$, there exists a rainbow
path connecting $u$ and $u'$.
\end{claim}
\noindent{\itshape Proof of Claim $2$.} First, we show that $(i)$
and $(ii)$ hold. We only prove part $(i)$, since part $(ii)$ can be
proved by a similar argument. By the procedure of constructing $X$
and $Y$, we know that for any $x\in X$, either there exists a vertex
$y\in Y$ such that $xy\in E(G)$, or there exists a vertex $y\in Y$
such that $x$ and $y$ are connected by a path $P$ with length $3$
satisfying $(V(P)\setminus\{x,y\})\subseteq B$. Clearly, this path
is a $\{3,4,5\}$-rainbow path.

Next, we show that $(iii)$ holds. Let $u, u'\in D_1$. For any $y\in
Y$, since $diam(G)=2$, we have that $u$ and $y$ have a common
adjacency vertex $w\in X$. Furthermore, without loss of generality,
assume that $uw$ has color $5$. Then $u-w-y-v-w'-u'$ is a rainbow
path connecting $u$ and $u'$, where $u'$ is adjacent to $w'$ by a
$4$-color edge $u'w'$.

Finally, we show that $(iv)$ holds. For any $y\in Y$, since
$diam(G)=2$, we have that $u$ and $y$ have a common adjacency vertex
$w\in X$. Thus $u-w-y-v-u'$ is a rainbow path connecting $u$ and
$u'$. \hfill$\bigtriangleup$

It is easy to see that the above edge-coloring is rainbow in this
case from Figure $1$ and Table $1$.
\begin{figure}[htbp]
{\scriptsize
\begin{center}
\begin{tabular}{|p{0.5cm}|p{0.5cm}|p{1.8cm}|p{1.8cm}|p{1.5cm}|p{1.5cm}|p{2cm}|p{2cm}|}
\hline & $v$ & $X$ & $Y$ & $A$ & $B_1$ & $B_2$ & $D_1$\\\hline $v$ &
--- & $v-X$ & $v-Y$ & $v-X-A$ & $v-X-B_1$ & $v-X-B_1-B_2$ &
$v-X-D_1$
\\\hline
$X$ &--- & Claim $2$ and $Y-v-X$ & $X-v-Y$ & $X-v-Y-A$ &
$X-v-Y-B_2-B_1$& $X-v-Y-B_2$ & Claim $2$
\\\hline
$Y$ & --- & --- & Claim $2$ and $X-v-Y$ & $Y-v-X-A$ & $Y-v-X-B_1$ &
$Y-v-X-B_1-B_2$ & $Y-v-X-D_1$
\\\hline
$A$ & --- & --- & --- & $A-X-v-Y-A$ & $A-Y-v-X-B_1$ & $A-X-v-Y-B_2$
& $A-Y-v-X-D_1$
\\\hline
$B_1$ & --- &--- & --- &---  & $B_1-X-v-Y-B_2-B_1$ & $B_1-X-v-Y-B_2$
& $B_1-B_2-Y-v-X-D_1$
\\\hline
$B_2$ & --- & --- & --- & ---  & --- & $B_2-B_1-X-v-Y-B_2$ &
$B_2-Y-v-X-D_1$
\\\hline
$D_1$ & --- & --- & --- & --- & --- & --- &  Claim $2$
\\\hline
\end{tabular}
\end{center}}
\centerline{Table $1$. The rainbow paths in $G$}
\end{figure}

{\bf Case $2.$} $B=\emptyset$.

In this case, clearly, $N_G(u)\subseteq N_G(v)$ for any $u\in
N^2_G(v)$. To show a rainbow coloring of $G$, we need to construct a
new graph $H$. The vertex set of $H$ is $N_{G}(v)$, and the edge set
is $\{xy\,|\,x,y\in N_G(v),\ x$ and $y$ are connected by a path $P$
with length at most $2$ in $G-v$, and $V(P)\cap N_{G}(v)=\{x,y\}$.

\begin{claim}
The graph $H$ is connected.
\end{claim}
\noindent{\itshape Proof of Claim $3$.} Let $x$ and $y$ be any two
distinct vertices of $H$. Since $G$ is $2$-connected, $x$ and $y$
are connected by a path in $G-v$. Assume that
$P=(x=v_0,v_{1},\ldots,v_{k}=y)$ is a shortest path between $x$ and
$y$ in $G-v$.

If $k=1$, then by the definition of $H$, $x$ and $y$ are adjacent in
$H$. Otherwise, $k\geq 2$. Since $diam(G)=2$, $v_i$ is adjacent to
$v$, or $v_i$ and $v$ have a common neighbor $u_i$ if
$d_{G}(v,v_i)=2$. For any integer $0\leq i\leq k-1$, if
$d_{G}(v,v_i)=1$ and $d_{G}(v,v_{i+1})=1$, then $v_i$ and $v_{i+1}$
are contained in $V(H)$, and adjacent in $H$. If $d_{G}(v,v_i)=1$
and $d_{G}(v,v_{i+1})=2$, then $v_i$ and $u_{i+1}$ are contained in
$V(H)$, and adjacent in $H$. If $d_{G}(v,v_i)=2$ and
$d_{G}(v,v_{i+1})=1$, then $u_i$ and $v_{i+1}$ are contained in
$V(H)$, and adjacent in $H$. If $d_{G}(v,v_i)=2$ and
$d_{G}(v,v_{i+1})=2$, then $u_i$ and $u_{i+1}$ should be contained
in $B$, which contradicts the fact that $B=\emptyset$. Thus, there
exists a path between $x$ and $y$ in $H$. The proof of Claim $3$ is
complete. \hfill$\bigtriangleup$

Let $T$ be a spanning tree of $H$, and let $X$ and $Y$ be the
bipartition defined by this tree. Now divide $N^2_{G}(v)$ as
follows: for any $u\in N^2_{G}(v)$,
$$let\ A=\{u\in N^2_{G}(v)\,|\,u\in N_{G}(X)\cap N_{G}(Y)\};$$
and for any $u\in N^2_{G}(v)\setminus A$,
$$let\ D_1=\{u\in N^2_{G}(v)\,|\,u\in N_{G}(X)\setminus N_{G}(Y)\},$$
$$\quad D_2=\{u\in N^2_{G}(v)\,|\,u\in N_{G}(Y)\setminus N_{G}(X)\}.$$

We say that at least one of $D_1$ and $D_2$ is empty. Otherwise,
there exist $u\in D_1$ and $v\in D_2$ such that $d_{G}(u,v)\geq 3$,
which contradicts the fact that $diam(G)=2$. Without loss of
generality, assume $D_2=\emptyset$. Then $A$ and $D_1$ form a
partition of $N^2_{G}(v)$ (see Figure $2$).

First, we provide a $4$-edge-coloring $c: E(G)\setminus
E_G[D_1,X]\rightarrow \{1,2,\ldots,4\}$ defined by
$$c(e)=\left\{\begin{array}{ll} 1, & if\ e\in E_{G}[v,X];\\
 2, & if\ e\in E_{G}[v,Y];\\ 3, & if\ e\in E_{G}[X,Y]\cup
 E_{G}[Y,A];\\ 4, & if\ e\in E_{G}[X,A],\ or\ otherwise.
 \end{array}\right.$$

Next, we color the edges of $E_{G}[X,D_1]$ as follows. For any
vertex $u\in D_1$, color one edge incident with $u$ by $5$ (solid
lines), the other edges incident with $u$ are colored by $4$ (dotted
lines). See Figure $2$.

Now, we show that the above edge-coloring is a rainbow in this case
from Figure $2$ and Table $2$.

\begin{figure}[htbp]
{\scriptsize
\begin{center}
\begin{tabular}{|p{0.5cm}|p{0.5cm}|p{1.8cm}|p{1.8cm}|p{1.5cm}|p{2cm}|}
\hline & $v$ & $X$ & $Y$ & $A$  & $D_1$\\\hline $v$ & --- & $v-X$ &
$v-Y$ & $v-X-A$  & $v-X-D_1$
\\\hline
$X$ & --- & Claim $2$ and $Y-v-X$ & $X-v-Y$ & $X-v-Y-A$ &
 Claim $2$
\\\hline
$Y$ & --- & --- & Claim $2$ and $X-v-Y$ & $Y-v-X-A$  & $Y-v-X-D_1$
\\\hline
$A$ & --- & --- & --- & $A-X-v-Y-A$ & $A-Y-v-X-D_1$
\\\hline
$D_1$ & --- & --- & --- & ---  & $D_1-A-Y-v-X-D_1$
\\\hline
\end{tabular}
\end{center}}
\centerline{Table $2$. The rainbow paths in $G$}
\end{figure}

By this both possibilities have been exhausted and the proof is thus
complete.
\end{proof}

Combining Proposition $2$ with Lemmas $1$ and $2$, we have the
following theorem.
\begin{theorem}
Let $G$ be a bridgeless graph with diameter $2$. Then $rc(G)\leq 5.$
\end{theorem}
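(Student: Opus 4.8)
The plan is to reduce the statement to the two structural results already established, so that the whole argument becomes a short case split driven by Proposition $2$. First I would invoke Proposition $2$, which tells us that a bridgeless graph $G$ of diameter $2$ falls into exactly one of two classes: either $G$ is $2$-connected, or $G$ has a single cut vertex $v$ that is the center of $G$ with radius $1$. These two cases are mutually exclusive and jointly exhaustive, so it suffices to bound $rc(G)$ by $5$ in each of them separately.

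In the $2$-connected case I would simply apply Lemma $2$, which yields $rc(G)\leq 5$ directly, so no additional work is needed. In the remaining case, where $G$ has a cut vertex, I would apply Lemma $1$, which gives the stronger bound $rc(G)\leq 3$; since $3\leq 5$, this is more than enough. Taking the two regimes together then gives $rc(G)\leq 5$ for every bridgeless graph of diameter $2$, which is exactly the claimed bound.

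The main obstacle does not lie in this final assembly, which is essentially immediate, but in the lemmas on which it rests — in particular Lemma $2$, whose proof requires the careful construction of the partition $X,Y$ of $N_G(v)$ together with the auxiliary sets $A$, $B_1$, $B_2$, $D_1$ and an explicit $5$-coloring verified against every vertex-pair type. The only point one must genuinely check at this stage is that Proposition $2$ really does cover all bridgeless diameter-$2$ graphs and that its two outcomes align precisely with the hypotheses of Lemmas $1$ and $2$, with no third configuration slipping through. Once that matching is confirmed, the theorem follows by taking the larger of the two bounds, namely $\max\{3,5\}=5$.
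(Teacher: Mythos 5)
Your proposal is correct and matches the paper's own argument exactly: the paper also derives this theorem by combining Proposition $2$ with Lemma $1$ (the cut-vertex case, giving $rc(G)\leq 3$) and Lemma $2$ (the $2$-connected case, giving $rc(G)\leq 5$). Nothing further is needed.
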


A simple graph $G$ which is neither empty nor complete is said to be
$strongly\ regular$ with parameters $(n,k,\lambda,\mu)$, denoted by
$SRG(n,k,\lambda,\mu)$, if $(i)$ $V(G) = n$; $(ii)$ $G$ is
$k$-regular; $(iii)$ any two adjacent vertices of $G$ have $\lambda$
common neighbors; $(iv)$ any two nonadjacent vertices of $G$ have
$\mu$ common neighbors. It is well known that a strongly regular
with parameters $(n,k,\lambda,\mu)$ is connected if and only if
$\mu\geq 1$.

\begin{corollary}
If $G$ is a strongly regular graph, other than a star, with $\mu\geq
1$, then $rc(G)\leq 5.$

\end{corollary}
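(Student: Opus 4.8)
The plan is to deduce the corollary from Theorem $8$, which guarantees $rc(G)\le 5$ for every bridgeless graph of diameter $2$. So it suffices to show that a strongly regular graph $G=SRG(n,k,\lambda,\mu)$ with $\mu\ge 1$, other than a star, meets both hypotheses of that theorem: it has diameter $2$, and it is bridgeless. Note first that by the definition recalled just before the corollary, an $SRG$ is neither complete nor empty, so $G$ is $k$-regular with $k\ge 2$ (a connected $1$-regular graph would be the complete graph $K_2$), and $\mu\ge 1$ forces $G$ to be connected.

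The diameter is immediate. Since $G$ is not complete it has a pair of nonadjacent vertices, and since $\mu\ge 1$ any two nonadjacent vertices share a common neighbour and hence lie at distance exactly $2$; adjacent vertices are at distance $1$. Therefore $diam(G)=2$.

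The main work, and the step I expect to be the only real obstacle, is to prove that $G$ is bridgeless. I would argue by contradiction: suppose some edge $uv$ is a bridge, and let $G_u$ and $G_v$ be the two components of $G-uv$, with $u\in G_u$ and $v\in G_v$. Because $k\ge 2$, the vertex $u$ has a neighbour $u'\ne v$, which necessarily lies in $G_u$, and likewise $v$ has a neighbour $v'\ne u$ lying in $G_v$. As $u'$ and $v'$ sit in different components of $G-uv$ they are nonadjacent, so by $\mu\ge 1$ they must have a common neighbour $w$. But $w$ lies in one of the two components, and if, say, $w\in G_u$, then the edge $wv'$ would run from $G_u$ to $G_v$, forcing $wv'=uv$ and hence $v'=v$, a contradiction; the symmetric case $w\in G_v$ is ruled out the same way. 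Thus no common neighbour can exist, contradicting $\mu\ge 1$, and $G$ has no bridge. (When $\lambda\ge 1$ there is a quicker route, since the two endpoints of any edge already have a common neighbour; the exclusion of the star is what keeps us out of the degenerate regime, for a star is a tree all of whose edges are bridges and whose rainbow connection number equals its size by Proposition $1$(ii).)

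Having verified that $G$ is a bridgeless graph of diameter $2$, I would then invoke Theorem $8$ directly to conclude $rc(G)\le 5$, completing the proof. The diameter claim requires nothing beyond $\mu\ge 1$, whereas the bridgeless claim is where the argument has to do genuine work, since regularity alone does not exclude bridges and one must exploit the $\mu\ge 1$ counting condition as above.
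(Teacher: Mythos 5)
Your proposal is correct, and it takes a genuinely different route from the paper. The paper proceeds by a case analysis on the parameters: if $\mu\geq 2$ it asserts $G$ is $2$-connected; if $\mu=1$ and $\lambda\geq 1$ it asserts $G$ is bridgeless; and the remaining case $\mu=1$, $\lambda=0$ is handled by first showing every induced cycle of $G$ has length exactly $5$ (triangles are excluded by $\lambda=0$, $4$-cycles and longer induced cycles by $\mu=1$), and then deriving a contradiction from a hypothetical bridge by locating a vertex $w$ on a cycle with $N_G(v)\cap N_G(w)=\emptyset$. Your argument replaces all of this with a single uniform step: if $uv$ were a bridge, then minimum degree $k\geq 2$ gives neighbors $u'\neq v$ and $v'\neq u$ on opposite sides of the bridge; these are nonadjacent, so $\mu\geq 1$ forces a common neighbor $w$, but $w$ would create a second edge across the cut, which is impossible. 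This is shorter, avoids the parameter cases entirely, and in fact proves something more general -- any connected graph with minimum degree at least $2$ in which every two nonadjacent vertices have a common neighbor is bridgeless; neither regularity beyond $k\geq 2$ nor the value of $\lambda$ plays any role. What the paper's longer route buys is extra structural information (the $2$-connectivity conclusion when $\mu\geq 2$, and the girth-$5$ Moore-graph structure in the case $\mu=1$, $\lambda=0$), none of which is needed for the corollary itself. You also explicitly verify the diameter-$2$ hypothesis of Theorem $8$, which the paper leaves implicit; that is a point in your favor, since Theorem $8$ requires both hypotheses.
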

\begin{proof}
If $\mu \geq 2$, then $G$ is $2$-connected. Thus $rc(G)\leq 5$ by
Theorem $8$. If $\mu=1$ and $\lambda\geq 1$, then $G$ is bridgeless.
Thus $rc(G)\leq 5$ by Theorem $8$. Thus, the left case is that
$\mu=1$ and $\lambda=0$.

First, suppose that $G$ is a tree. Then $G\cong K_2$ since $G$ is
regular. But this contradicts the fact that $G$ is a strongly
regular graph.

Next, suppose that $G$ is not a tree. We claim that all the induced
cycles of $G$ have length $5$. If $G$ has an induced cycle with
length $3$, then there exist two adjacent vertices $u$ and $v$ in
$C$ such that $|N_{G}(u)\cap N_{G}(v)|\geq 1$, which conflicts with
$\lambda=0$. If $G$ has an induced cycle with length $4$, then there
exist two nonadjacent vertices $u$ and $v$ in $C$ such that
$|N_{G}(u)\cap N_{G}(v)|\geq 2$, which conflicts with $\mu=1$.
Otherwise, $G$ has an induced cycle $C$ with length at least $6$.
Then there exist two nonadjacent vertices $u$ and $v$ in $C$ such
that $|N_{G}(u)\cap N_{G}(v)|=0$, which conflicts with $\mu=1$.

We say that $G$ is bridgeless. By contradiction, let $e=uv$ be a
bridge. Then there exist two components, say $G_1$ and $G_2$, in
$G-v$. Since $G$ is not a tree, there exists a cycle $C$ contained
in $G_1$ (or $G_2$). Without loss of generality, assume that $u\in
V(G_1)$ and $C\subseteq G_1$. Pick $w\in C$ such that $u\not\in
N_{G}(w)$ (There exists such a vertex, since all the induced cycles
of $G$ have length $5$). Then $v$ and $w$ are nonadjacent, and
$N_{G}(v)\cap N_{G}(w)=\emptyset$, which conflicts with $\mu=1$.
Thus $G$ is bridgeless. Therefore $rc(G)\leq 5$ by Theorem $8$.
\end{proof}

\noindent {\bf Remark.} From \cite{chan} we know that the complete
bipartite graph $K_{2, n}$ has a diameter 2, and its rainbow
connection number is 4 for $n\geq 10$. However, we failed to find an
example for which the rainbow connection number reaches 5.

\end{document}